\newcommand{\tr}{\mathrm{Tr}}
\newcommand{\real}{\mathbb{R}}
\renewcommand{\real}{\mathbb{R}}
\newcommand{\E}{\operatorname{\mathbb{E}}}
\newcommand{\sign}{\mathrm{sign}}
\newcommand{\En}{\mathrm{E}}
\DeclareSymbolFont{bbold}{U}{bbold}{m}{n}
\DeclareSymbolFontAlphabet{\mathbbold}{bbold}
\newtheorem{theorem}{Theorem}
\newtheorem{lemma}[theorem]{Lemma}
\newtheorem{proposition}[theorem]{Proposition}
\newtheorem{definition}[theorem]{Definition}
\newtheorem{assumption}[theorem]{Assumption}
\definecolor{gnblue4}{RGB}{0,108,212} 
\title{Contraction and concentration of measures with applications to theoretical neuroscience}
\author{ \href{https://orcid.org/0009-0000-3444-0838}{\includegraphics[scale=0.06]{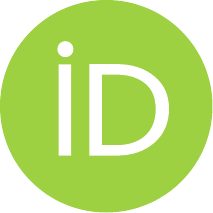}\hspace{1mm}Simone Betteti} \\
	Department of Information Engineering\\
	Università degli Studi di Padova\\
	Padova, 35131, IT \\
	\texttt{bettetisim[at]dei.unipd.it} \\
	\And
	\href{https://orcid.org/0000-0002-4785-2118}{\includegraphics[scale=0.06]{orcid.pdf}\hspace{1mm}Francesco Bullo} \\
	Center for Control, Dynamical Systems and Computation\\
	University of California at Santa Barbara,\\
	Santa Barbara, CA, 93106 IT \\
	\texttt{bullo[at]ucsb.edu}
}
\begin{document}
\maketitle

\begin{abstract}
	We investigate the asymptotic behavior of probability measures associated with stochastic dynamical systems featuring either globally contracting or $B_{r}$-contracting drift terms. While classical results often assume constant diffusion and gradient-based drifts, we extend the analysis to spatially inhomogeneous diffusion and non-integrable vector fields. We establish sufficient conditions for the existence and uniqueness of stationary measures under global contraction, showing that convergence is preserved when the contraction rate dominates diffusion inhomogeneity. For systems contracting only outside of a compact set and with constant diffusion, we demonstrate mass concentration near the minima of an associated non-convex potential, like in multistable regimes. The theoretical findings are illustrated through Hopfield networks, highlighting implications for memory retrieval dynamics in noisy environments.
\end{abstract}

\section{Introduction}
The effective functioning of dynamical systems in real world context relies on convergence to desirable states. From cellular reactions to ecosystems, from small circuitry to entire plants, systems evolve dynamically to attain robust and stable configurations, ensuring operational efficiency. Contraction theory~\citep{WL-JJES:98,JWSP-FB:12za,AD-SJ-FB:20o} has recently emerged as a powerful and insightful framework to study the convergence properties of dynamical systems. In essence, contraction theory establishes the asymptotic regularity of a system, whether through convergence to stable equilibria, limit cycles, or trajectories entrainment. Canonically, much of the literature on contraction theory has focused on deterministic system. However, in many practical scenarios, dynamical systems are inherently stochastic, either due to unknown environmental noise or because of the collective behavior of numerous interacting components. Therefore, it is critical to develop formal tools enabling the study of the asymptotic properties of stochastic dynamical systems.

The study of stochastic system asymptotics has a rich and evolving history, and has primarily leveraged the associated probability measure as investigative tool. The canonical approach assumes that the drift field follows the gradient of a potential function~\citep{RJ-DK-FO:97}. More general dynamics consider drift vector fields that are sum of a gradient term and a rotational (divergence-free) term. Under the hypothesis of a constant diffusion term, convergence in Wasserstein distance to a unique stationary probability measure has been established in~\cite{FB-IG-AG:12}. The case of globally contracting but not necessarily integrable drift fields has been investigated under similar diffusion assumptions~\citep{LN-MAP-GS:11}. A first thrust to expand beyond the case of constant diffusivity has been investigated in~\cite{QCP-JJS:13, BJ-JJS:19}, with convergence bounds depending both on the contraction and diffusion constants. Finally, in~\cite{DL-JW:16,PM:23} it has been shown that when drift term is contracting only outside a compact set there exists a unique stationary probability measures and initial distributions converge exponentially fast towards it. Given the pervasiveness of stochastic processes in application, we believe in the need for additional investigation of the associated measures for spatially inhomogeneous diffusivity. Additionally, in our recent study~\citep{SB-GB-FB-SZ:23k} we investigated a stochastic system characterized by a drift term contracting only outside of a compact set and with constant diffusion. Aside from the question on the existence and uniqueness of an associated stationary measure, we were mostly interested in a rigorous understanding of the regions where most of its mass concentrates.

Motivated by these open problems, the paper is structured as follows. Section 2 provides the necessary theoretical background, introducing the concept of c-strong contractivity, unique strong solutions to stochastic differential equations and the associated probability measures. Section 3 investigates stochastic systems with globally contracting drift fields and spatially inhomogeneous diffusion terms, providing sharper bounds on the convergence to a unique stationary measure than in~\cite{BJ-JJS:19}. Section 4 explores mass concentration properties for stochastic systems with drift field contracting outside of a ball centered at the origin. In particular, we prove that the mass concentrates around stable equilibria provided that the basins of attraction and local contraction rates are large enough. In addition, if the non-integrable drift field is associated to a non-convex potential, we prove that the mass tends to concentrates around the potential deepest minima. Finally, section 5 frames the theoretical results in the context of the Hopfield network, a dynamical system describing memory retrieval in the brain known for its multistable nature.

\noindent \textbf{Notation.} We identify with $B_{r}(x)\subset\real^{d}$ the ball of radius $r>0$ and centered at $x\in\real^{d}$. For a smooth manifold $\mathcal{M}\subseteq{\real^{d}}$ we identify with $T_{x}\mathcal{M}$ the tangent space of $\mathcal{M}$ at $x\in\mathcal{M}$, and with $\partial\mathcal{M}$ its boundary. We identify with $C^{k}(\mathcal{X};\mathcal{Y})$ the class of $k$-differentiable functions from $\mathcal{X}$ into $\mathcal{Y}$. Let $f\in C^{1}(\real^{d};\real)$ and denote with $\nabla f(x)\in\real^{d}$ its gradient. We identify the partial derivative of $f$ with respect to $x_{i}$ as $\partial_{x_{i}}f(x)$. Let $g\in C^{1}(\real^{d};\real^{d})$ and denote with $J_{x}g(x)\in\real^{d\times  d}$ its Jacobian, and with $\nabla\cdot g(x)\in\real$ its divergence. We identify the identity matrix of dimension $d\in\mathbb{N}$ as $\mathcal{I}_{d}\in\real^{d\times d}$. We refer to a positive definite matrix $A\in\real^{d\times d}$ as $A\succ 0$. Let $B\in\real^{d\times d}$ and we denote its trace operator as $\tr(B)$. We denote with $(\ ,\ )_{2}:\real^{d}\times\real^{d}\to\real$ the standard Euclidean inner product. We denote the weighted inner product as $(\ ,\ )_{A}:\real^{d}\times\real^{d}\to\real$, for $A\succ 0$. We identify with $\mathcal{P}_{2}(\mathcal{X})$ the space of probability measures over $\mathcal{X}$ having finite second moment. Stochastic processes are identified by boldface upper roman letters, i.e. $\mathbf{X}_{t}$.

\section{Theoretical Background}

Consider the Ito stochastic  differential equation (S.D.E.) 
\begin{equation}\label{eq: SDE-ref}
    \begin{cases}
        d\mathbf{X}_{t} = f(t,\mathbf{X}_{t})\:dt + G(t,\mathbf{X}_{t})\:d\mathbf{B}_{t}\\
        \mathbf{X}(0) = x_{0}\in\real^{d}
    \end{cases}
\end{equation}
where $\mathbf{B}_{t}$ is the standard d-dimensional Brownian motion.
\begin{assumption}[Lipschitzianity and sublinearity]\label{as: LL}
    Let $f:\real\times\real^{d}\rightarrow\real^{d}$ be the drift term, globally Lipschitz with constant $L_{f}>0$ and sublinear with constant $s_{f}>0$.
    \begin{align}
        (f(t,x)-f(t,y),x-y)_{2}&\leq L_{f}\|x-y\|^{2}\qquad\forall x,y\in\real^{d}\\
        \|f(t,x)\|^{2}&\leq s_{f}(1+\|x\|^{2}) 
    \end{align}
    The diffusion term $G:\real\times\real^{d}\rightarrow\real^{d\times d}$ obeys equivalent constraints.
\end{assumption}
Under these standard assumptions~\citep{IK-SS:14} we can guarantee the existence  and uniqueness of a strong solution to eq.~\eqref{eq: SDE-ref}.
\begin{definition}[Infinitesimal generator]
    Let $h:\real\times\real^{d}\rightarrow\real$ be a Lebesgue-measurable function. The infinitesimal generator of $h$ along the process~\eqref{eq: SDE-ref} is defined as
    \begin{align}
        \mathcal{A}h(t,x_{t})=&\lim_{\updelta t\rightarrow0}\frac{\E_{\mathbf{B_{t}}}[h(t+\updelta t, \mathbf{X}_{t+\updelta  t})]-h(t,x_{t})}{\updelta t}\nonumber\\
        =&\partial_{t}h(t,x_{t})+\nabla h(t,x_{t})^{\top}f(x_{t})\ \nonumber\\&+\frac{1}{2}\tr\{G(t,x_{t})\nabla^{2}h(t,x_{t})G(t,x_{t})^{\top}\}\ .
    \end{align}
\end{definition}
The probability measure $\upmu\in\mathcal{P}_{2}(\real_{\geq 0}\times\real^{d})$ is related~\citep{GAP:14} to the S.D.E.~\eqref{eq: SDE-ref} via the Kolmogorov forward equation (or Fokker-Planck equation)
\begin{align}\label{eq: KFE-ref}
        \partial_{t}\upmu(t,x) =& \nabla\cdot\{-f(t,x)\upmu(t,x)+\frac{1}{2}\nabla\cdot[D(t,x)\upmu(t,x)]\}\\
        \upmu(0,x) =& \upmu_{0}(x)\qquad\forall x\in\real^{d}
\end{align}
where $D(t,x)=G(t,x)G(t,x)^{\top}$ and $\upmu_{0}\in \mathcal{P}_{2}(\real^{d})$ is the initial distribution of the states, i.e. the distribution of the initial conditions for eq.~\eqref{eq: SDE-ref}. Additionally, we introduce the notion of distance in $\mathcal{P}_{2}(\real^{d})$ between two probability measures~\citep{CRG-RMS:84}.
\begin{definition}[Wasserstein metric]
    Let $\upmu_{1},\upmu_{2}\in\mathcal{P}_{2}(\real^{d})$ be two probability measures having finite second moment. Define $\Gamma(\upmu_{1},\upmu_{2})$ the set of the joint probability measures $\upgamma$ having $\upmu_{1}$ and $\upmu_{2}$ as marginals. The $p$-Wasserstein metric is defined as
    \begin{align}
        W_{p}(\upmu_{1},\upmu_{2})&=\left(\inf_{\upgamma\in\Gamma(\upmu_{1},\upmu_{2})}\E_{\upgamma}[\|x-y\|^{p}]\right)^{\frac{1}{p}}.
    \end{align}
\end{definition}
For the sake of brevity, from now on we will abbreviate $\Gamma(\upmu_{1},\upmu_{2})$ as simply $\Gamma$, and $\upmu(t,x)=\upmu_{t}(x)$.

\section{Contracting drifts and convergence to stationary measures}
We begin by introducing the definition of global contractivity, and henceforth consider only autonomous (time-independent) drift terms.
\begin{definition}[c-strong contractivity]
    We say that $f:\real^{d}\to\real^{d}$ is c-strongly infinitesimally contracting w.r.t the $2$-norm if it holds
    \begin{equation}
        (f(x)-f(y),x-y)_{2}\leq -c\|x-y\|_{2}\qquad \forall x,y\in\real^{d}
    \end{equation}
    where $c>0$ is the contraction rate.
\end{definition}
We now provide sufficient conditions for the convergence to stationarity of a probability measure~\eqref{eq: KFE-ref} for a $c$-strongly contracting drift term and spatially inhomogeneous diffusion term. 
\begin{theorem}[Contraction of measures]\label{thm: stat}
    Let $\{\mathbf{X}_{t}\}_{t\geq 0}$ be the unique strong solution to eq.~\eqref{eq: SDE-ref} with drift $f$ and diffusion $G$ satisfying Assumption~\ref{as: LL}. Let $f$ be c-strongly contracting w.r.t. the $2$-norm. If $c>L_{G}/2$, then the solution $\upmu_{t}\in\mathcal{P}_{2}(\real^{d})$ to eq.~\eqref{eq: KFE-ref} associated to the process $\{\mathbf{X}_{t}\}_{t\geq 0}$ converges in the 2-Wasserstein metric to a unique stationary probability measure $\upmu^{\star}\in\mathcal{P}_{2}(\real^{d})$.
\end{theorem}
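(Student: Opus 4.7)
The plan is to use a synchronous coupling between two solutions of the SDE driven by the \emph{same} Brownian motion but started from different initial distributions, and to show that the squared distance between the two processes contracts in expectation. The semigroup $\Phi_t: \upmu_0 \mapsto \upmu_t$ will then be a strict contraction on the complete metric space $(\mathcal{P}_2(\real^d), W_2)$, so Banach's fixed-point theorem delivers both existence and uniqueness of $\upmu^\star$, as well as exponential convergence.

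More concretely, let $\upmu_0, \upnu_0 \in \mathcal{P}_2(\real^d)$ and let $\mathbf{X}_t, \mathbf{Y}_t$ be the unique strong solutions of~\eqref{eq: SDE-ref} driven by a common Brownian motion $\mathbf{B}_t$ with initial laws $\upmu_0, \upnu_0$ respectively, where $(\mathbf{X}_0, \mathbf{Y}_0)$ is chosen to realize the optimal $W_2$-coupling of the initial measures. Writing $\mathbf{Z}_t = \mathbf{X}_t - \mathbf{Y}_t$ and applying It\^o's formula to $\|\mathbf{Z}_t\|^2$ gives
\begin{align*}
d\|\mathbf{Z}_t\|^2 = 2\bigl(\mathbf{Z}_t, f(\mathbf{X}_t) - f(\mathbf{Y}_t)\bigr)_2\, dt + \|G(\mathbf{X}_t) - G(\mathbf{Y}_t)\|_F^2\, dt + dM_t,
\end{align*}
where $M_t$ is a local martingale arising from the Brownian integral. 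The $c$-strong contractivity of $f$ bounds the drift term by $-2c\|\mathbf{Z}_t\|^2$, while the Lipschitz-type assumption on $G$ (interpreted in Frobenius norm) yields the bound $L_G\|\mathbf{Z}_t\|^2$ on the quadratic-variation term. Taking expectations (using standard localization to discard the martingale) and applying Gr\"onwall's inequality produces
\begin{align*}
\E\|\mathbf{X}_t - \mathbf{Y}_t\|^2 \leq e^{-(2c - L_G)t}\, \E\|\mathbf{X}_0 - \mathbf{Y}_0\|^2.
\end{align*}
Since $(\mathbf{X}_t, \mathbf{Y}_t)$ is an admissible coupling of $\upmu_t, \upnu_t$, and the initial coupling was optimal, this implies the $W_2$-contraction estimate $W_2(\upmu_t, \upnu_t) \leq e^{-(c - L_G/2)t}\, W_2(\upmu_0, \upnu_0)$, which is strictly contractive precisely when $c > L_G/2$.

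Finally, for any fixed $T > 0$ large enough that $e^{-(c - L_G/2)T} < 1$, the map $\Phi_T$ is a Banach contraction on the complete space $(\mathcal{P}_2(\real^d), W_2)$, hence admits a unique fixed point $\upmu^\star$. The semigroup identity $\Phi_T \circ \Phi_s = \Phi_s \circ \Phi_T$ shows that $\Phi_s \upmu^\star$ is also a fixed point of $\Phi_T$, forcing $\Phi_s \upmu^\star = \upmu^\star$ for every $s \geq 0$; so $\upmu^\star$ is the unique stationary measure, and exponential convergence follows by applying the coupling bound with $\upnu_0 = \upmu^\star$.

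The main delicate point is the treatment of the diffusion contribution: one must show that the relevant Lipschitz constant for $G$ that enters through the quadratic variation term is $L_G$ (matching the scaling in the hypothesis $c > L_G/2$) rather than $L_G^2$, which requires interpreting the assumption on $G$ as a one-sided / Frobenius-squared Lipschitz estimate. A secondary technical issue is justifying that the local martingale $M_t$ has vanishing expectation; this is handled by the sublinear growth in Assumption~\ref{as: LL}, which guarantees uniform second-moment bounds on $\mathbf{X}_t$ and $\mathbf{Y}_t$ on compact time intervals, so standard localization suffices.
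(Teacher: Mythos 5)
Your proposal is correct and the core of it—synchronous (parallel) coupling under a common Brownian motion, It\^o/Dynkin applied to $\|\mathbf{X}_t-\mathbf{Y}_t\|^2$, the one-sided contractivity bound $-2c\|\mathbf{Z}_t\|^2$ on the drift plus the Frobenius-squared Lipschitz bound $L_G\|\mathbf{Z}_t\|^2$ on the quadratic-variation term, Gr\"onwall, and passage to the optimal initial coupling—is exactly the argument in the paper, down to the rate $e^{-(2c-L_G)t}$ on $W_2^2$. Your reading of the hypothesis on $G$ as $\|G(t,x)-G(t,y)\|_F^2\le L_G\|x-y\|^2$ is also the interpretation the paper relies on implicitly when it bounds the trace term.

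The one genuine difference is that you finish the job: the paper's proof stops at the contraction estimate $W_2^2(\upmu_t,\upnu_t)\le W_2^2(\upmu_0,\upnu_0)e^{-(2c-L_G)t}$, which by itself gives uniqueness of a stationary measure and convergence to it \emph{if one exists}, but does not establish existence. Your appeal to the Banach fixed-point theorem for $\Phi_T$ on the complete metric space $(\mathcal{P}_2(\real^d),W_2)$, together with the semigroup commutation argument showing the fixed point is stationary for every $s\ge 0$, supplies precisely the missing step. Your treatment is also slightly cleaner in that you couple random initial conditions realizing the optimal plan directly, avoiding the paper's somewhat awkward detour through Jensen's inequality and deterministic initial points before averaging over $\upgamma_0$.
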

\begin{proof}
    Using a parallel coupling approach~\citep{LN-MAP-GS:11}, we consider two stochastic processes $\{\mathbf{X}_{t}(x_{0},\upomega)\}_{t\geq 0}$ and $\{\mathbf{Z}_{t}(z_{0},\upomega)\}_{t\geq 0}$ both unique strong solutions to eq.~\eqref{eq: SDE-ref} for different initial conditions $x_{0},z_{0}\in\real^{d},\ x_{0}\neq z_{0}$ and the same realization of the noise $\upomega\in\real^{d}$. Taking the standard $2$-norm $h(x)=\|x\|_{2}^{2}$ and evaluating the infinitesimal generator along the difference process $\{\mathbf{X}_{t}-\mathbf{Z}_{t}\}_{t\geq 0}$, we obtain
    \begin{align}
        \mathcal{A}h(&x_{t}-z_{t})=2(f(x_{t})-f(z_{t}),x_{t}-z_{t})_{2}+\tr\{(G(t,x_{t})-G(t,z_{t}))(G(t,x_{t})-G(t,z_{t}))^{\top}\}.
    \end{align}
    Exploiting now the c-strong contractivity of the field $f$ and the Lipschitzianity of $G$
    \begin{equation}
        \mathcal{A}h(x_{t}-z_{t})\leq-\underbrace{(2c-L_{G})}_{>0}h(x_{t}-z_{t}).
    \end{equation}
    Using Dynkin's formula~\citep{BO:13} on the Ito differential $dh(\mathbf{X}_{t}-\mathbf{Z}_{t})$
    \begin{equation}\label{eq: dyn}
        \E_{\mathbf{B}_{t}}[h(\mathbf{X}_{t}-\mathbf{Z}_{t})]-h(x_{0}-z_{0})\leq-\medint\int_{0}^{t}(2c-L_{G})h(\mathbf{X}_{s}-\mathbf{Z}_{s})\:ds.
    \end{equation}
    Applying a specialized version of Gronwall lemma~\citep{QCP:07} we get
    \begin{equation}\label{eq: gron}
        \E_{\mathbf{B}_{t}}[h(\mathbf{X}_{t}-\mathbf{Z}_{t})]\leq h(x_{0}-z_{0})e^{-(2c-L_{G})t}
    \end{equation}
    Let $\upmu_{t}\in\mathcal{P}_{2}(\real^{d})$ be the probability measure associated to the process $\{\mathbf{X}_{t}\}_{t\geq 0}$ and $\upnu_{t}\in\mathcal{P}_{2}(\real^{d})$ be the probability measure associated to the process $\{\mathbf{Z}_{t}\}_{t\geq 0}$. By the positivity of the norm
    \begin{align}\label{eq: norm}
        \E_{\mathbf{B}_{t}}[h(\mathbf{X}_{t}-\mathbf{Z}_{t})]&\geq h(\E_{\mathbf{B}_{t}}[\mathbf{X}_{t}-\mathbf{Z}_{t}])\nonumber\\
        &=h(x_{t}(x_{0})-z_{t}({z_{0}}))
    \end{align}
    where in the last line the processes are independent from the noise realization $\upomega\in\real^{d}$. Let now $\Gamma_{t}$ denote the set of joint probability measures $\upgamma_{t}$ having $\upmu_{t}$ and $\upnu_{t}$ as marginals.
    Taking expectations of eq.~\eqref{eq: gron} w.r.t. the joint measure, and exploiting eq.~\eqref{eq: norm} we get
    \begin{align}
        \E_{\upgamma_{t}}[h(x_{t}-z_{t})]&=\E_{\upgamma_{0}}[h(x_{t}(x_{0})-z_{t}(z_{0}))]\nonumber\\ &\leq \E_{\upgamma_{0}}[h(x_{0}-z_{0})]e^{-(2c-L_{G})t}
    \end{align}
    and miniminzing w.r.t. $\upgamma_{0}$
    \begin{equation}
        \inf_{\upgamma_{o}\in\Gamma_{0}}\E_{\upgamma_{t}}[h(x_{t}-z_{t})]\leq W^{2}_{2}(\upmu_{0},\upnu_{0})e^{-(2c-L_{G})t}.
    \end{equation}
    By the optimality of the infimum w.r.t. the joint measure $\upgamma_{t}\in\Gamma_{t}$ we get
    \begin{equation}
        W^{2}_{2}(\upmu_{t},\upnu_{t})\leq W^{2}_{2}(\upmu_{0},\upnu_{0})e^{-(2c-L_{G})t}
    \end{equation}
\end{proof}
Observe how the presence of spatially inhomogeneous diffusion results in a "discount" of the decay rate to stationarity for the associated measure, which would otherwise equal the contraction rate in the case of homogeneous diffusion. Despite the "discount", the transient towards stationarity retains its exponential character.
Define now the new diffusion term $Q:\real^{d}\to\real^{d\times d}$ with global Lipschitz constant $L_{Q}>0$ and $L = \min\{L_{G},\ L_{Q}\}$.
\begin{assumption}[Noise bound]\label{as: unif}
    The diffusion terms $G$ and $Q$ are uniformly upper bounded in the $2$-norm (Frobenius norm).
    \begin{align}
        \max\{\sup_{t,x} \{\|G(t,x)\|_{F}\},\sup_{t,x}\{\|Q(t,x)\|_{F}\}\}&<+\infty
    \end{align}
\end{assumption}
Then we have the following theorem that bounds the distance in the 2-Wasserstein metric between the probability measures associated to processes with equal drift term and different diffusion term (both processes satisfy the assumptions of Theorem \ref{thm: stat}).
\begin{proposition}[Contraction for different diffusions]\label{prop: diff}
    Let $\{\mathbf{X}_{t}\}_{t\geq 0}$ be the unique strong solution of eq.~\eqref{eq: SDE-ref} with diffusion $G$ and associated measure $\upmu_{t}$. Let $\{\mathbf{Z}_{t}\}_{t\geq 0}$  be the unique strong solution of eq.~\eqref{eq: SDE-ref} with diffusion term $Q$ and associated stationary measure $\upnu_{t}$. Under the hypothesis of Assumption~\ref{as: unif}, if $c>L/2$ then
    \begin{align}
        W^{2}_{2}(\upmu^{\star},\upnu^{\star})&\leq\chi^{2}= \sup_{t,x}\|G(t,x)-Q(t,x)\|_{F}^{2}
    \end{align}
\end{proposition}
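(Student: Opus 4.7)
The plan is to adapt the synchronous coupling argument of Theorem~\ref{thm: stat} to two processes that share the drift $f$ and driving noise but differ in their diffusion coefficients. I would drive $\mathbf{X}_t$ (with diffusion $G$) and $\mathbf{Z}_t$ (with diffusion $Q$) by a common Brownian motion, start them from a joint coupling $\upgamma_0 \in \Gamma(\upmu_0,\upnu_0)$, and track $\E[h(\mathbf{X}_t-\mathbf{Z}_t)]$ with $h(y) = \|y\|_2^2$.

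Applying Ito's formula to the difference process, whose diffusion coefficient is now the mismatched $G(t,\mathbf X_t) - Q(t,\mathbf Z_t)$, gives
\begin{equation*}
\mathcal{A}h(x_t,z_t) = 2\bigl(f(x_t)-f(z_t),\, x_t-z_t\bigr)_2 + \|G(t,x_t) - Q(t,z_t)\|_F^2,
\end{equation*}
and $c$-strong contractivity controls the drift contribution by $-2c\,h(x_t-z_t)$. For the new, mismatched diffusion term, inserting $\pm G(t,z_t)$ (or symmetrically $\pm Q(t,x_t)$) and combining the triangle inequality, the Lipschitz estimate inherited from Assumption~\ref{as: LL}, and the uniform bound of Assumption~\ref{as: unif} yields
\begin{equation*}
\|G(t,x_t) - Q(t,z_t)\|_F \leq \sqrt{L}\,\|x_t-z_t\|_2 + \chi,\qquad L=\min\{L_G,L_Q\}.
\end{equation*}
A weighted Young's inequality $(a+b)^2 \leq (1+\varepsilon)a^2 + (1+1/\varepsilon)b^2$ with auxiliary parameter $\varepsilon>0$ then separates the quadratic contribution from the $\chi^2$ constant:
\begin{equation*}
\mathcal{A}h \leq -\bigl(2c-(1+\varepsilon)L\bigr)\,h + \bigl(1+\tfrac{1}{\varepsilon}\bigr)\chi^2.
\end{equation*}

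For any $\varepsilon$ small enough that $\alpha_{\varepsilon} := 2c-(1+\varepsilon)L > 0$ (allowed since $c>L/2$), I would repeat the Dynkin-formula and Gronwall steps~\eqref{eq: dyn}--\eqref{eq: gron} verbatim, but now for the inhomogeneous linear inequality above, obtaining
\begin{equation*}
\E_{\upgamma_t}[h(x_t-z_t)] \leq \E_{\upgamma_0}[h(x_0-z_0)]e^{-\alpha_{\varepsilon} t} + \frac{(1+1/\varepsilon)\chi^2}{\alpha_{\varepsilon}}\bigl(1-e^{-\alpha_{\varepsilon} t}\bigr).
\end{equation*}
Minimizing the left-hand side over $\upgamma_0$ bounds $W_2^2(\upmu_t,\upnu_t)$ by the right-hand side; sending $t\to\infty$ (using the $W_2$-convergence $\upmu_t\to\upmu^\star$ and $\upnu_t\to\upnu^\star$ from Theorem~\ref{thm: stat}) leaves $(1+1/\varepsilon)\chi^2/\alpha_{\varepsilon}$; and optimizing over $\varepsilon$ produces the claimed stationary bound.

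The main technical obstacle I anticipate lies in the final constant. A direct minimization of $(1+1/\varepsilon)/\alpha_{\varepsilon}$ over $\varepsilon$ produces $\chi^2/(\sqrt{2c}-\sqrt{L})^2$, which captures the right qualitative behavior (finite iff $c>L/2$ and sharp against the explicit Ornstein--Uhlenbeck benchmark at $L=0$) but equals the stated $\chi^2$ only when $\sqrt{2c}-\sqrt{L}\geq 1$. Matching the bare $\chi^2$ of the statement would require either a sharper treatment of the cross term in the expansion of $\|G-Q\|_F^2$ that the plain triangle inequality discards, a coupling other than the synchronous one tailored to the diffusion mismatch, or absorbing the residual factor into the definition of $\chi$. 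Justifying the limit-infimum interchange that promotes the synchronous-coupling bound to a genuine statement about $W_2^2(\upmu^\star,\upnu^\star)$ — relying on Assumption~\ref{as: unif} to control the relevant second moments — is a secondary but necessary technical step.
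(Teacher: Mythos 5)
Your proposal follows the same route as the paper's proof: synchronous coupling of the two processes, the generator identity $\mathcal{A}h(x_t-z_t)=2(f(x_t)-f(z_t),x_t-z_t)_2+\|G(t,x_t)-Q(t,z_t)\|_F^2$, Dynkin plus Gronwall, and passage to the limit using the convergence of $\upmu_t$ and $\upnu_t$. The only substantive difference is the treatment of the mismatched diffusion term, and there your version is the more careful one. The paper bounds $\|G(t,x_t)-Q(t,z_t)\|_F^2$ by $\|G(t,x_t)-G(t,z_t)\|_F^2+\|G(t,z_t)-Q(t,z_t)\|_F^2$, attributing this to the triangle inequality; but squaring the triangle inequality leaves a cross term $2\|G(t,x_t)-G(t,z_t)\|_F\,\|G(t,z_t)-Q(t,z_t)\|_F$ that is silently dropped. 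Your weighted Young's inequality is exactly the device needed to absorb that cross term, at the cost of degrading the rate to $2c-(1+\varepsilon)L$. The second discrepancy you flag is also real: Gronwall for $\dot y\leq -ay+b$ gives the asymptote $b/a$, so even granting the paper's decomposition the limiting bound should be $\chi^2/(2c-L)$ rather than the bare $\chi^2$, which holds as stated only when $2c-L\geq 1$; your optimized constant $\chi^2/(\sqrt{2c}-\sqrt{L})^{2}$ is the honest output of this line of argument. So there is no gap on your side beyond the one you already identified --- the missing prefactor is a defect of the paper's stated bound, not of your argument. One further repair worth noting: the paper's final step invokes the triangle inequality for $W_2^2$, which is not a metric (only $W_2$ is); this is harmless because $W_2$-convergence of $\upmu_t$ and $\upnu_t$ directly gives $W_2^2(\upmu^\star,\upnu^\star)\leq\liminf_{t\to\infty}W_2^2(\upmu_t,\upnu_t)$, which is all that is needed and is essentially the interchange you list as your secondary technical step.
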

\begin{proof}
    Assume without loss of generality that $L=L_{G}$ and consider the difference process $\{\mathbf{X}_{t}-\mathbf{Z}_{t}\}_{t\geq 0}$. Evaluating the infinitesimal generator of $h$ along the difference process we obtain
    \begin{align}
        \mathcal{A}h(x_{t}-z_{t})=&2(f(x_{t})-f(z_{t}),x_{t}-z_{t})_{2}+\|G(t,x_{t})-Q(t,z_{t})\|_{F}^{2}\nonumber\\
        \leq & -2c\ h(x_{t}-z_{t}) + \|G(t,x_{t})-G(t,z_{t})\|_{F}^{2}+\|G(t,z_{t})-Q(t,z_{t})\|_{F}^{2}\nonumber\\
        \leq & -(2c-L)h(x_{t}-z_{t}) + \chi^{2}
    \end{align}
    where in the second passage we have used the strong c-contractivity of $f$ and the triangular inequality, and in the third passage the Lipschitzianity of $G$ and Assumption~\ref{as: unif}. Similarly to Theorem~\ref{thm: stat}, we can apply Dynkin's formula and a specialized version of Gronwall lemma~\citep{QCP:07} to reach
    \begin{align}
        \E_{\mathbf{B}_{t}}[h(\mathbf{X}_{t}-\mathbf{Z}_{t})]&\leq \left(h(x_{0}-z_{0})-\frac{\chi^{2}}{c}\right)^{+}e^{-(2c-L)t}+\chi^{2}\nonumber\\
        &\leq h(x_{0}-z_{0})e^{-(2c-L)t}+\chi^{2}.
    \end{align}
    Taking expectations w.r.t. the joint measures $\upgamma_{t}\in\Gamma_{t}$ and $\upgamma_{0}\in\Gamma_{0}$ and minimizing in the respective sets, we obtain that
    \begin{equation}\label{eq: Was-ineq}
        W^{2}_{2}(\upmu_{t},\upnu_{t})\leq W^{2}_{2}(\upmu_{0},\upnu_{0})e^{-(2c-L)t}+\chi^{2}\quad\forall t\geq 0.
    \end{equation}
    Since $W^{2}_{2}$ is a metric for the space of probability measures, by the triangular inequality we have
    \begin{align}
        W^{2}_{2}(\upmu^{\star},\upnu^{\star})\leq&  W^{2}_{2}(\upmu^{\star},\upmu_{t}) + W^{2}_{2}(\upnu_{t},\upnu^{\star}) + W^{2}_{2}(\upmu_{t},\upnu_{t}).
    \end{align}
    Since it holds $\forall t\geq 0$, passing to the limit and exploiting the convergence of $\upmu_{t}$ and $\upnu_{t}$ we have
    \begin{equation}\label{eq: Was-stat-ineq}
        W^{2}_{2}(\upmu^{\star},\upnu^{\star})\leq \lim_{t\rightarrow +\infty} W^{2}_{2}(\upmu_{t},\upnu_{t}).
    \end{equation}
    Finally, using inequality \eqref{eq: Was-stat-ineq} taking the limit in \eqref{eq: Was-ineq} we get
    \begin{align}
        W^{2}_{2}(\upmu^{\star},\upnu^{\star})\leq \lim_{t\rightarrow +\infty} W^{2}_{2}(\upmu_{0},\upnu_{0})e^{-(2c-L)t}+\chi^{2}
    \end{align}
\end{proof}

\section{$B_{r}$-contracting drifts and concentration of stationary measures}
We are now interested in the analysis of a more general scenario where the drift term has possibly many equilibria (see for example Fig.~\ref{fig:examp}(b)) and is therefore not globally contracting on $\real^{d}$.
\begin{figure}[!h]
    \centering
    \includegraphics[width=.8\linewidth]{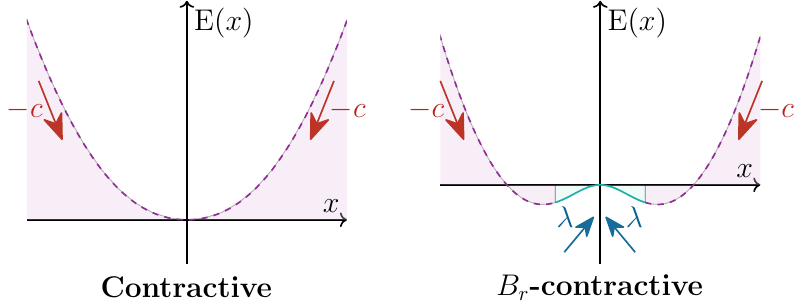}
    \caption{Visual example of potentials $\mathrm{E}(x)$ associated to globally contracting and $B_{r}$-contracting vector fields $f$. (a) Globally contracting vector field associated to a convex potential. (b) $B_{r}$-contractive vector field associated to a mostly convex potential, with a small concave region.}
    \label{fig:examp}
\end{figure}
\begin{definition}[$c$-strong $B_{r}$-contractivity]
    Let $f\in C^{1}(\real^{d},\real^{d})$ and define constants $c,\uplambda>0$ and a radius $r>0$. We say that $f$ is c-strongly $B_{r}$-contracting on $\real^{d}$ if
    \begin{equation}
        (f(x)-f(y),x-y)_{2}\leq
        \begin{cases}
             -c\|x-y\|^{2}\qquad \forall x,y\in B_{r}(0)^{c}\\
             \uplambda\|x-y\|^{2}\qquad\:\: \text{otherwise}
        \end{cases}
    \end{equation}
\end{definition}
The case of $B_{r}$-contractivity is particularly interesting since it encompasses more complex dynamics endowed with multiple equilibrium points. In particular, it allows to relate the concentration of measure around the stable equilibrium points to the properties of their basin of attraction. We will focus on stochastic dynamics characterized by constant noise $G(t,x)\equiv \upomega\mathcal{I}_{d}$, for which it has been proved~\cite{PM:23} that the associated measure converges to stationarity.
\begin{assumption}[Stable equilibria and contraction rates]
	Define $\mathcal{D}=\{x^{\star}\in\real^{D}:\ f(x^{\star})=0, x^{\star}\text{ stable}\}$. We suppose that for each $x^{\star}\in\mathcal{D}$ there exists $r^{\star}>0$ and $c^{\star}>0$ such that
	\begin{equation}
		(f(x)-f(y),x-y)_{2}\leq -c^{\star}\|x-y\|_{2}^{2}\qquad \forall x,y\in B_{r^{\star}}(x^{\star})
	\end{equation}
\end{assumption}
Namely, each stable equilibrium of the $B_{r}$-contracting drift term has a basin of attraction containing a ball of radius $r^{\star}$ where the dynamics are locally contractive in the 2-norm with contraction rate $c^{\star}$. We now prove how the radius $r^{\star}$ and the contraction rate $c^{\star}$ determine if the ball $B_{r^{\star}}(x^{\star})$ attracts or dissipates mass as the measure evolves in time. For notational simplicity, we will henceforth refer to any ball of radius $r>0$ centered at $0\in\real^{d}$ as $B_{r}\equiv B_{r}(0)$, and the integrands will be abbreviated as $f(x)g(x)=(fg)_{|x}$ when necessary.
\begin{proposition}[Mass sinks]\label{prop: van}
    Let $f$ be $c$-strongly $B_{r}$-contracting and $\upmu_{t}\in\mathcal{P}_{2}(\real^{d})$ be the measure associated to the process~\eqref{eq: SDE-ref} via the FPE~\eqref{eq: KFE-ref}. Let $x^{\star}\in\mathcal{D}$ stable equilibrium with $B_{r^{\star}}(x^{\star})$ contained in the basin of attraction. If $c^{\star}\geq\frac{d}{2}\left(\frac{\upomega}{r^{\star}}\right)^{2}$ then
    \begin{equation}
        \medint\int_{B_{r^{\star}}(x^{\star})}\upmu^{\star}(x)\ dx \geq \medint\int_{B_{r^{\star}}(x^{\star})}\upmu_{0}(x)\ dx.
    \end{equation}
\end{proposition}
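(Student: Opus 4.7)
The plan is to establish that $t \mapsto \int_{B_{r^\star}(x^\star)}\upmu_t(x)\,dx$ is non-decreasing in time, from which the desired inequality follows by letting $t \to \infty$ and invoking the convergence of $\upmu_t$ to $\upmu^\star$, which in this $B_{r}$-contracting setting with constant diffusion is guaranteed by~\cite{PM:23}. For brevity write $B \equiv B_{r^\star}(x^\star)$ and let $n(x) = (x-x^\star)/r^\star$ denote the outward unit normal on $\partial B$.

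First I would integrate the Fokker-Planck equation~\eqref{eq: KFE-ref} over $B$ and apply the divergence theorem. With constant diffusion $G \equiv \upomega \mathcal{I}_d$, so $D = \upomega^{2}\mathcal{I}_d$, this yields
\begin{equation*}
    \frac{d}{dt}\int_{B}\upmu_t(x)\,dx = -\int_{\partial B}\Bigl[f(x)\upmu_t(x) - \tfrac{\upomega^{2}}{2}\nabla\upmu_t(x)\Bigr]\cdot n(x)\,dS.
\end{equation*}
Since $f(x^\star) = 0$, the local contractivity gives $f(x)\cdot n(x) = (f(x)-f(x^\star), x-x^\star)_{2}/r^\star \leq -c^\star r^\star$ for every $x \in \partial B$, so the drift portion of the flux is controlled by $-\int_{\partial B} f\cdot n\,\upmu_t\,dS \geq c^\star r^\star \int_{\partial B}\upmu_t\,dS$.

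The hard part will be controlling the diffusion flux $\tfrac{\upomega^{2}}{2}\int_{\partial B} \nabla\upmu_t\cdot n\,dS$, which a priori has no definite sign. The hypothesis $c^\star \geq \tfrac{d}{2}(\upomega/r^\star)^{2}$ is precisely the condition $\mathcal{A}V \leq 0$ on $\partial B$ for the Lyapunov candidate $V(x) = \|x-x^\star\|^{2}$, since $\mathcal{A}V = 2(x-x^\star)\cdot f(x) + \upomega^{2} d \leq -2c^\star r^{\star 2} + \upomega^{2} d \leq 0$ on the sphere $\{V = r^{\star 2}\}$. I expect this observation to be the linchpin: by testing the Fokker-Planck equation against a smooth radial cutoff $\phi(x) = \psi(\|x-x^\star\|^{2})$ with $\psi \equiv 1$ on $[0, r^{\star 2}]$ and smoothly decreasing to zero past a slightly larger radius, the identity $\tfrac{d}{dt}\int \phi\,\upmu_t\,dx = \int \mathcal{A}\phi\,\upmu_t\,dx$ combined with a radial decomposition of $\mathcal{A}\phi$ in terms of $\psi'$, $\psi''$, and $\mathcal{A}V$ should allow the drift and diffusion contributions to be combined, with the sign of $\mathcal{A}V$ on the sphere forcing the resulting expression to be non-negative. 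A monotone convergence argument as $\psi$ tends to the indicator of $[0, r^{\star 2}]$ then yields $\tfrac{d}{dt}\int_{B}\upmu_t\,dx \geq 0$, from which the proposition follows.
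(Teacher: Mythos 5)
Your opening steps coincide with the paper's: integrate the Fokker--Planck equation over $B_{r^{\star}}(x^{\star})$, apply the divergence theorem, and bound the drift flux via $f(x)\cdot n(x)=(f(x)-f(x^{\star}),x-x^{\star})_{2}/r^{\star}\leq -c^{\star}r^{\star}$. You also correctly identify the crux: the diffusion flux $\tfrac{\upomega^{2}}{2}\int_{\partial B}\nabla\upmu_{t}\cdot n\,d\sigma$ has no pointwise sign. The gap is that your proposed mechanism for controlling it does not work. For $\phi=\psi(V)$ with $V=\|x-x^{\star}\|^{2}$ one computes $\mathcal{A}\phi=\psi'(V)\,\mathcal{A}V+2\upomega^{2}\psi''(V)V$. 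The first term behaves as you hope ($\psi'\leq 0$ on the transition shell and $\mathcal{A}V\leq 0$ there under your hypothesis), but the second term is not sign-definite: any smooth $\psi$ decreasing from $1$ to $0$ with $\psi'=0$ at both ends of the transition interval satisfies $\int\psi''=0$, so $\psi''$ must change sign, and its contribution is of order $\epsilon^{-2}$ on a shell of width $\epsilon$ and therefore does not vanish in the limit. Indeed, a radial integration by parts shows that as $\psi$ tends to the indicator, $\int 2\upomega^{2}\psi''(V)V\upmu_{t}\,dx$ converges precisely to $\tfrac{\upomega^{2}}{2}\int_{\partial B}\partial_{n}\upmu_{t}\,d\sigma+\tfrac{\upomega^{2}d}{2r^{\star}}\int_{\partial B}\upmu_{t}\,d\sigma$; the unknown-sign normal-derivative flux reappears, and the condition $\mathcal{A}V\leq 0$ on the sphere merely reproduces the drift estimate together with a correction that cancels the second of these terms. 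The argument circles back to exactly the quantity it set out to avoid, so as written it does not close.

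For comparison, the paper disposes of the diffusion flux by applying its Lemma~\ref{lm: tr} with $A=\upmu_{t}\mathcal{I}_{d}$ and $v=\upxi$, which asserts $\int_{\partial B}(\nabla\upmu_{t},\upxi)_{2}\,d\sigma=-\tfrac{d}{r^{\star}}\int_{\partial B}\upmu_{t}\,d\sigma$, turning the flux into a nonpositive multiple of the surface mass that the hypothesis $c^{\star}\geq\tfrac{d}{2}(\upomega/r^{\star})^{2}$ then dominates. Before adopting that route, note that the identity deserves scrutiny: it already fails for $\upmu_{t}$ constant on a neighborhood of $\partial B$ (left side zero, right side strictly negative), because the lemma integrates an ambient partial derivative, rather than a tangential one, over the closed surface before invoking Stokes. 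So your instinct that the diffusion flux is genuinely problematic is sound; a complete proof needs either additional information on $\upmu_{t}$ near $\partial B$ or a different functional inequality, not the radial cutoff device you sketch.
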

\begin{proof}
    Assuming sufficient regularity of the measure $\upmu_{t}$ to invert integral and derivatives, we have
    \begin{align}
        &\partial_{t}\medint\int_{B_{r^{\star}}(x^{\star})}\upmu_{t}(x)\ dx\nonumber\\
        =&\medint\int_{B_{r^{\star}}(x^{\star})}\partial_{t}\upmu_{t}(x)\ dx\nonumber\\
        =&\medint\int_{B_{r^{\star}}(x^{\star})}\nabla\cdot\left[-f\upmu_{t}+\frac{\upomega^{2}}{2}\nabla\upmu_{t}\right]_{|x}\ dx\nonumber\\
        =&\medint\int_{\partial B_{r^{\star}}(x^{\star})}\left[-\upmu_{t}(f,\upxi)_{2}+\frac{\upomega^{2}}{2}(\nabla\upmu_{t},\xi)_{2}\right]_{|x}\ d_{x}\upsigma\nonumber\\
        =&\medint\int_{\partial B_{r^{\star}}(x^{\star})}\upmu_{t}\left[-(f,\upxi)_{2}-\frac{\upomega^{2}}{2}\tr(J_{x}\upxi)\right]_{|x}\ d_{x}\upsigma
    \end{align}
    where $d_{x}\upsigma$ is the surface measure, $\upxi(x)\in [T_{x}\partial(B_{\uprho}/B_{R})]^{\perp}$ is the surface outer normal, and in the last passage we used Lemma~\ref{lm: tr}. The outer normal to the shell $\partial B_{r^{\star}}(x^{\star})$ is $\upxi(x)=(x-x^{\star})/r^{\star}$, so we have
    \begin{align}
        &\partial_{t}\medint\int_{B_{r^{\star}}(x^{\star})} \upmu_{t}(x)\ dx\nonumber\\
        =&\frac{1}{r^{\star}}\medint\int_{\partial B_{r^{\star}}(x^{\star})}{\upmu_{t}}_{|x}\left[-(f(x),x-x^{\star})_{2}-\frac{d}{2}\upomega^{2}\right]\ d_{x}\upsigma\nonumber\\
        \geq&\frac{1}{r^{\star}}\medint\int_{\partial B_{r^{\star}}(x^{\star})}{\upmu_{t}}_{|x}\left[c^{\star}\|x-x^{\star}\|_{2}^{2}-\frac{d}{2}\upomega^{2}\right]\ d_{x}\upsigma\nonumber\\
        =&\frac{1}{r^{\star}}\upmu_{t}(\partial B_{r^{\star}}(x^{\star}))\left[c^{\star}{r^{\star}}^{2}-\frac{d}{2}\upomega^{2}\right]\geq 0
    \end{align}
    where from the second to the third passage we have used the fact that $f(x^{\star})=0$ and the contractivity of $f$ in $B_{r^{\star}}(x^{\star})$. Finally, from the third to the last passage we have used the fact that $\|x-x^{\star}\|^{2}_{2}\equiv {r^{\star}}^{2}$ for all $x\in\partial B_{r^{\star}}(x^{\star})$ and denoted $\upmu_{t}(\partial B_{r^{\star}}(x^{\star}))>0$ the probability measure over the surface of the ball. Thus, we have that for all $t>0$
    \begin{equation}
    	\medint\int\displaylimits_{B_{r^{\star}}(x^{\star})}\upmu_{t}(x)\ dx \geq \medint\int\displaylimits_{B_{r^{\star}}(x^{\star})}\upmu_{0}(x)\ dx.
    \end{equation}
    and taking the limit $t\to+\infty$ we conclude.
\end{proof}
The previous result showcases an interesting interplay between the radius $r^{\star}$ of the largest ball $B_{r^{\star}}(x^{\star})$ contained in the basin of attraction of $x^{\star}$ and the contraction rate $c^{\star}$ within the region. Specifically, for equilibria with small basin of attraction, the contraction rate must be significantly large for mass to concentrate within the ball. Conversely, equilibria in regions characterized by weak contraction rates can still attract mass provided that the radius $r^{\star}$ is large enough. We now focus on the simplest generalization of the pure gradient of a potential $\En\in C^{2}(\real^{d};\real)$ such that the $B_{r}$-contracting drift field can be expressed as $f(x)=-P(x)\nabla\En(x)$.
\begin{assumption}[Positivity]\label{as: Pmat}
    The matrix $P\in C^{1}(\real^{d};\real^{d\times d})$ is positive definite $P(x)\succ 0$ for all $x\in\real^{d}$, it is diagonal and $\sign(\partial_{x_{i}}P_{ii}(x))=\sign(x_{i})$.
\end{assumption}
Specifically, we would like to understand whether we can draw useful information on the stationary measure $\upmu^{\star}\in\mathcal{P}_{2}(\real^{d})$ given knowledge on $\En(x)$.
\begin{theorem}[Concentration of mass]\label{thm: mass}
    Let $\{\mathbf{X}_{t}\}_{t\geq 0}$ be the solution of eq.~\eqref{eq: SDE-ref} with $f(x)=-P(x)\nabla\En(x)$ $c$-strongly $B_{r}$-contractive, $P\in C^{1}(\real^{d};\real^{d\times d})$ satisfying Assumption~\ref{as: Pmat}, and $G(t,x)\equiv \upomega\mathcal{I}_{d}$, for $\upomega>0$. Let $x_{a},x_{b}\in\real^{d}$ be minima of the potential $\En(x)$ and choose $r>0$ such that
    \begin{itemize}
        \item[(I)] $\partial B_{r}(x_{a})$ ($\partial B_{r}(x_{b})$) is in the same orthant of $x_{a}$ ($x_{b}$). 
        \item[(II)] $\En(z+x_{a})\leq\En(x_{b})\leq\En(z+x_{b})<0$ for all $z\in B_{r}$.
        \item[(III)] The stationary measure associated to the process $\{\mathbf{X}_{t}\}_{t\geq 0}$ satisfies (weakly)
        $$\nabla\upmu^{\star}(x)=-P(x)\nabla\En(x)\upmu^{\star}(x).$$ 
    \end{itemize} Then the stationary measure satisfies the integral inequality
    \begin{equation}\label{eq: eqmass}
        \medint\int_{B_{r}(x_{a})}\upmu^{\star}(x)\ dx\geq \medint\int_{B_{r}(x_{b})}\upmu^{\star}(x)\ dx.
    \end{equation}
\end{theorem}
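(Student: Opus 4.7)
The plan is to reduce the integral inequality to a pointwise comparison of the density, then attack that comparison via a path-independent line integral representation of the log-density.

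First, I would extract an explicit form for the stationary density from assumption (III). Rearranging $\nabla\upmu^{\star} = -P\nabla\En\cdot\upmu^{\star}$ gives $\nabla\log\upmu^{\star} = -P\nabla\En$, so $\upmu^{\star}(x) = C\exp(\Psi(x))$ for a scalar $\Psi$ satisfying $\nabla\Psi = -P\nabla\En$. The very existence of such a global $\Psi$ is an integrability (closedness) condition on the $1$-form $P\nabla\En$ that (III) implicitly grants. Translating each ball to the origin via $x = x_{\bullet} + z$ transforms \eqref{eq: eqmass} into
\begin{equation*}
\int_{B_{r}} e^{\Psi(x_{a} + z)}\,dz \;\geq\; \int_{B_{r}} e^{\Psi(x_{b} + z)}\,dz,
\end{equation*}
and it suffices to establish the pointwise inequality $\Psi(x_{a} + z) \geq \Psi(x_{b} + z)$ for every $z \in B_{r}$.

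Second, I would represent this difference as the path-independent integral (valid by (III))
\begin{equation*}
\Psi(x_{a} + z) - \Psi(x_{b} + z) = -\int_{\gamma} P(x)\nabla\En(x)\cdot dx
\end{equation*}
along any piecewise smooth $\gamma$ from $x_{b} + z$ to $x_{a} + z$, and choose the three-legged broken path $x_{b} + z \to x_{b} \to x_{a} \to x_{a} + z$. On the first and third legs, both endpoints sit in the single orthants of $x_{b}$ and $x_{a}$ respectively by (I), so the critical-point conditions $\nabla\En(x_{a}) = \nabla\En(x_{b}) = 0$ together with Assumption~\ref{as: Pmat} allow a Taylor-type control of these contributions. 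On the middle leg $x_{b} \to x_{a}$, the diagonality of $P$ lets me pick an axis-aligned piecewise-linear path; each coordinate segment then becomes a one-dimensional integral $\int P_{ii}\partial_{x_{i}}\En\,dx_{i}$, which by integration by parts splits into a boundary term $[P_{ii}\En]$ and a correction $-\int(\partial_{x_{i}}P_{ii})\En\,dx_{i}$. The sign condition on $\partial_{x_{i}}P_{ii}$ (Assumption~\ref{as: Pmat}) and the strict negativity plus ordering of $\En$ in (II) are exactly what is needed to sign both contributions in the desired direction.

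The main obstacle, and the reason (III) is stated as a weak assumption rather than derived, is that $P\nabla\En$ is not the gradient of any simple expression in $\En$ unless $P$ is essentially constant, so the uniform bound $\En(z+x_{a}) \leq \En(x_{b}) \leq \En(z+x_{b})$ from (II) does not translate mechanically into a bound on $\Psi$. Bridging this gap requires careful interplay of (I), (II) and Assumption~\ref{as: Pmat}: the orthant restriction (I) ensures that axis-aligned paths stay in regions where $P_{ii}$ is monotonic in each $x_{i}$, the sign condition on $\partial_{x_{i}}P_{ii}$ pins down the sign of the correction terms, and the strict negativity of $\En$ from (II) gives them the right sign. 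If a clean pointwise comparison proves elusive, a fallback is to bound the ratio of the two integrals directly by coupling the two integration variables through the translation $z \mapsto z$ and absorbing the $P$-weighting into a single averaged exponential inequality, which is still driven by the uniform potential gap supplied by (II).
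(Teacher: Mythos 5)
Your reduction to the pointwise inequality $\Psi(x_{a}+z)\geq\Psi(x_{b}+z)$ is where the argument breaks down. That comparison is strictly stronger than the integral inequality in the statement, and the hypotheses cannot support the path-integral argument you sketch for it, because (I) and (II) are purely local to the two balls: (I) constrains only $\partial B_{r}(x_{a})$ and $\partial B_{r}(x_{b})$, and (II) controls $\En$ and its sign only on $B_{r}(x_{a})$ and $B_{r}(x_{b})$. Your three-legged path forces you to integrate $P\nabla\En$ along a middle leg from $x_{b}$ to $x_{a}$ that leaves both balls; there you know neither the sign of $\En$ (needed to sign the correction $-\int(\partial_{x_{i}}P_{ii})\En\,dx_{i}$ after integrating by parts) nor anything about the boundary terms $[P_{ii}\En]$ at the intermediate corners of the axis-aligned path, which evaluate $\En$ at points not covered by (II). On the first and third legs, Taylor control near the critical points yields magnitude estimates, not signs, and $\nabla\En(x_{a})=\nabla\En(x_{b})=0$ does not by itself determine the sign of $\int P\nabla\En\cdot dx$ over a leg of length $r$. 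So none of the three contributions is actually signed, and the pointwise claim remains unproved. A further caution on your first step: reading (III) as $\nabla\log\upmu^{\star}=-P\nabla\En$ and deducing a global potential $\Psi$ presupposes $\upmu^{\star}>0$ and, more importantly, that $P\nabla\En$ is an exact one-form, which is in tension with the non-integrability of the drift that motivates the theorem; the paper only ever uses (III) weakly, substituted inside surface integrals.

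The paper's proof avoids all of this by never comparing densities pointwise and never connecting the two wells by a path. It integrates the stationary Fokker--Planck equation over $B_{r}(y)$, converts to surface integrals on $\partial B_{r}(x_{a})$ and $\partial B_{r}(x_{b})$ via the divergence theorem, and compares those two surface integrals directly: (II) bounds $\En$ on the spheres, while Assumption~\ref{as: Pmat} together with (I) and (III) signs the factor $\nabla\cdot(P\upxi\upmu^{\star})$ there. The volume inequality then follows by integrating the surface inequality over the radius $r\in[0,r_{\max})$. Every quantity in that argument lives on the two spheres, exactly where the hypotheses give information. If you want to salvage your strategy, you should replace the pointwise comparison by a comparison of surface integrals at matched radii, which is precisely the paper's route.
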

\begin{proof}
    The existence and uniqueness of a stationary measure $\upmu^{\star}\in\mathcal{P}_{2}(\real^{d})$ was proved in~\cite{PM:23}, and $\upmu^{\star}$ solves for all $y\in\real^{d}$ the integral equation
    \begin{align}
        0&=\medint\int_{B_{r}(y)}\nabla\cdot\left[P(x)\nabla\En(x)\upmu^{\star}(x)+\frac{\upomega^{2}}{2}\nabla\upmu^{\star}(x)\right]\ dx\nonumber\\
        &=\medint\int_{\partial B_{r}(y)}\upmu^{\star}(x)\left[(\nabla\En(x),\upxi(x))_{P}-\frac{\upomega^{2}}{2}\nabla\cdot\upxi(x)\right]\ d_{x}\upsigma
    \end{align}
    where the unit outer normal is $\upxi(x)=(x-y)/r$, so that $\nabla\cdot\upxi(x)=d/r$. There always exists $\upepsilon>0$ such that $\upomega^{2}d(2r)^{-1}\geq\upepsilon$, and therefore
    \begin{equation}\label{eq: posInt}
        \medint\int_{\partial B_{r}(y)}\upmu^{\star}(x)(\nabla\En(x),\upxi(x))_{P}\ d_{x}\upsigma\geq\upepsilon\medint\int_{\partial B_{r}(y)}\upmu^{\star}(x)\ d_{x}\upsigma
    \end{equation}
    hence the left surface integral is always greater or equal to zero. We now evaluate the difference of the same integral quantities centered on the two minima $x_{a},x_{b}$ of the potential $\En(x)$.
    \begin{align}
        0=&\medint\int_{\partial B_{r}(x_{a})}\upmu^{\star}(x)\left[(\nabla\En(x),\upxi(x))_{P}-\frac{\upomega^{2}d}{2r}\right]\ d_{x}\upsigma-\medint\int_{\partial B_{r}(x_{b})}\upmu^{\star}(x)\left[(\nabla\En(x),\upxi(x))_{P}-\frac{\upomega^{2}d}{2r}\right]\ d_{x}\upsigma\nonumber\\
    \end{align}
    Letting now $c^{r,d}_{\upomega}=2r(\upomega^{2}d)^{-1}$, rearranging the integrals and operating the change of variables $z+x_{a}=x$ (and same for $x_{b}$), we obtain
    \begin{align}\label{eq: traneq}
        &\medint\int_{\partial B_{r}}\upmu^{\star}(z+x_{a})-\upmu^{\star}(z+x_{b})\ d_{z}\upsigma=c^{r,d}_{\upomega}\medint\int_{\partial B_{r}} [\upmu^{\star}(\nabla\En,\upxi)_{P}]_{|z+x_{a}}-[\upmu^{\star}(\nabla\En,\upxi)_{P}]_{|z+x_{b}}\ d_{z}\upsigma
    \end{align}
    Focusing on the integral on the right-hand side and using integration by parts we get
    \begin{align}\label{eq: eneInt}
        &-\medint\int_{\partial B_{r}}[\En\nabla\cdot(P\upxi\upmu^{\star})]_{|z+x_{a}}\ d_{z}\upsigma+\medint\int_{\partial B_{r}}[\En\nabla\cdot(P\upxi\upmu^{\star})]_{|z+x_{b}}\ d_{z}\upsigma 
    \end{align}
    We now focus on the divergence term inside each integrand. Specifically, we have that
    \begin{align}\label{eq: div}
        \nabla\cdot(P\upxi\upmu^{\star})_{|x} &=\Big{[}\sum_{i=1}^{d}\partial_{x_{i}}(P_{ii}\upxi_{i})\Big{]}_{|x}\upmu^{\star}_{|x}+[(P\upxi,\nabla\upmu^{\star})_{2}]_{|x}\nonumber\\
        &=\upmu^{\star}_{|x}\Big{[}\sum_{i=1}^{d}(\partial_{x_{i}}P_{ii})\upxi_{i}+P_{ii}\partial_{x_{i}}\upxi_{i}-(P\upxi,P\nabla\En)\Big{]}_{|x}
    \end{align}
    where from the second to the third passage we have used (III). Since we are evaluating eq.~\eqref{eq: div} along the surface of the ball $\partial B_{r}(x_{a})$ ($\partial B_{r}(x_{b})$ resp.) we have that $[(\partial_{z_{i}}P_{ii})\upxi_{i}]_{z+x_{a}}=r^{-1}[\partial_{z_{i}}P_{ii}(z+x_{a})z_{i}]>0$ (I), $[P_{ii}\partial_{z_{i}}\upxi_{i}]_{|z+x_{a}}=r^{-1}P_{ii}(z+x_{a})>0$. Finally, since the balls are centered at the minima of $\En(x)$, $\nabla\En_{|z+a}\parallel z$ and in particular $\sign(\partial_{z_{i}}\En(z+x_{a}))=-\sign(z_i{})$, from which we conclude that
    \begin{equation}
        \nabla\cdot(P\upxi\upmu^{\star})_{|x}>0\qquad\forall x\in\partial B_{r}(x_{a})\quad (\partial B_{r}(x_{b})\text{ resp.})
    \end{equation}
    The integral on top of eq.~\eqref{eq: eneInt} is positive, as proved by eq.~\eqref{eq: posInt}. Using (II) $\En(z+x_{a})\leq\En(x_{b})\leq \En(z+x_{b})$ in eq.~\eqref{eq: eneInt}, we obtain
    \begin{align}
        &\medint\int_{\partial B_{r}}-[\En\nabla\cdot(P\upxi\upmu^{\star})]_{|z+x_{a}}+[\En\nabla\cdot(P\upxi\upmu^{\star})]_{|z+x_{b}}\ d_{z}\upsigma\nonumber\\
        &\geq\medint\int_{\partial B_{r}} -\En_{|x_{b}}\nabla\cdot[(P\upxi\upmu^{\star})_{|z+x_{a}}-(P\upxi\upmu^{\star})_{|z+x_{b}}]\ d_{z}\upsigma\nonumber\\
        &\geq \En(x_{b})\medint\int_{\partial B_{r}}\nabla\cdot[(P\upxi\upmu^{\star})_{|z+x_{b}}-(P\upxi\upmu^{\star})_{|z+x_{a}}]\ d_{z}\upsigma = 0
    \end{align}
    This means that the right-hand side of eq.~\eqref{eq: traneq} is positive, and therefore
    \begin{equation}
        \medint\int_{\partial B_{r}}\upmu^{\star}(z+x_{a})-\upmu^{\star}(z+x_{b})\ d_{z}\upsigma\geq 0
    \end{equation}
    Finally, given a maximal radius $r_{\max}$ such that the hypothesis (I), (II), and (III) holds, we will have that the conditions also hold for all $r\in[0,r_{\max})$, from which it follows that
    \begin{align}
        &\medint\int_{0}^{r_{\max}}\medint\int_{\partial B_{r}}\upmu^{\star}(z+x_{a})-\upmu^{\star}(z+x_{b})\ d_{z}\upsigma\ dr\nonumber\\
        &=\medint\int_{B_{r_{\max}}(x_{a})}\upmu^{\star}(x)\ dx -\medint\int_{B_{r_{\max}}(x_{b})}\upmu^{\star}(x)\ dx\geq  0. 
    \end{align}
\end{proof}
In plain terms, the theorem states that for systems whose drift is almost the gradient of a potential $\En(x)$, most of the measure mass concentrates around the deepest minima of $\En(x)$. The result well relates to the known case~\cite{AH-KFC-BT-SJM:20} of a stochastic process with $f(x)=-P(x)\nabla\En(x)+\nabla\cdot P(x)$ and $G(t,x)\equiv \sqrt{2}P(x)^{1/2}$, which has associated unique stationary measure $\upmu^{\star}(x)=\mathcal{Z}^{-1}e^{-\En(x)}$ for $\mathcal{Z}$ normalization constant.

\section{Input-driven Hopfield networks and stochastic memory retrieval}
We illustrate the theoretical results by applying them to Hopfield networks, a class of dynamical systems used for memory retrieval that have recently gained renewed interest due to connections with Transformer architectures~\citep{RH-HS:21, BH-YL-BP-RP-HS-DCH-MZ-DK:23}. Specifically, we adopt the framework in~\cite{SB-GB-FB-SZ:23k}, where the interaction between an external input and the connectivity matrix governs whether the dynamics are globally or $B_{r}$-contracting. For expositional and visual clarity, we focus on a two-dimensional system.

\begin{definition}[Input-driven Hopfield model]
    Let $u:\real_{\geq 0}\to\real^{2}_{\geq 0}$ be the constant external input, and define the matrix $M = [1, 1; 1, -1]$. The Input-driven Hopfield dynamics are
    \begin{equation}\label{eq:Hop}\tag{H}
        \begin{cases}
            \dot x = -x+W_{u}\Phi(x)\\
            x(0) = x_{0}\in\real^{2}
        \end{cases}
    \end{equation}
    with activation function $\Phi(x) = (\tanh(\upbeta x_{1}), \tanh(\upbeta x_{2}))$ and $W_{u}=M\text{diag}(u)M^{\top}$.
\end{definition}
Notably, the Hopfield dynamics are associated with an Energy (potential) function of the form
\begin{equation}
    \mathrm{E}(x)=-\frac{1}{2}\Phi(x)^{\top}W_{u}\Phi(x)+x^{\top}\Phi(x)-\sum_{i=1}^{2}\medint\int_{0}^{x_{i}}\Phi_{i}(s)\ ds
\end{equation}
and its dynamics can be rewritten as $\dot x=-J_{x}\Phi(x)^{-1}\nabla\mathrm{E}(x)$, where $J_{x}\Phi(x)^{-1}$ is the inverse of the Jacobian of the activation function. Notice that, for the given activation function, we have $J_{x}\Phi(x)^{-1}_{ii}=[\upbeta(1-\tanh(\upbeta x_{i})^{2})]^{-1}>0$. In addition, $\partial_{x_{i}}(J_{x}\Phi(x)^{-1}_{ii})=2\tanh(\upbeta x_{i})[\upbeta(1-\tanh(\upbeta x_{i})^{2})]^{-1}$, so that $\sign(\partial_{x_{i}}(J_{x}\Phi(x)^{-1}_{ii}))=\sign(x_{i})$. Thus, the conditions of Assumption~\ref{as: Pmat} are satisfied.
\begin{figure}[!tbph]
    \centering
    \subfloat{\includegraphics[width=.5\linewidth]{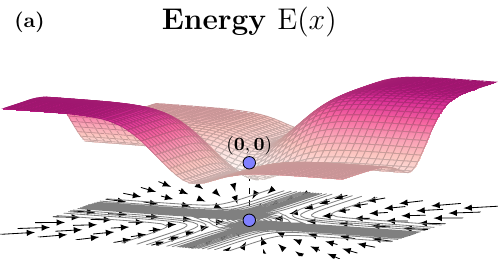}}
    \subfloat{\includegraphics[width=.5\linewidth]{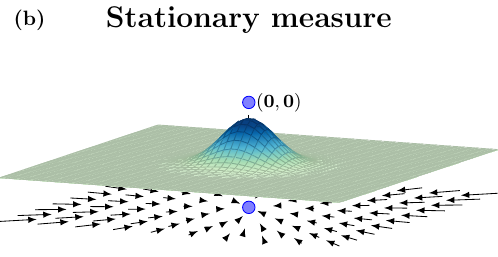}}
    
    \subfloat{\includegraphics[width=.5\linewidth]{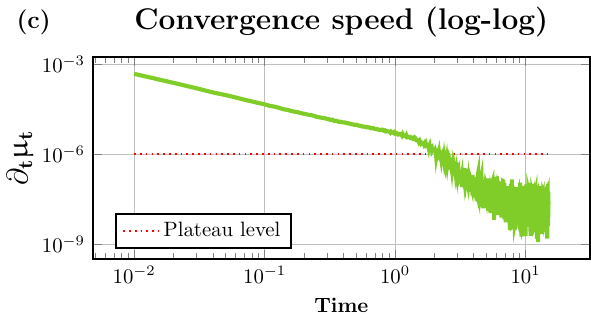}}
    \caption{A stochastic Hopfield model with a globally contracting drift term. (a) Energy function associated to the Hopfield model, with a unique global minimum in the origin. (b) The stationary measure is a gaussian centered at the origin - the globally asymptotically stable equilibrium point of the drift term. The black arrows beneath panels (a-b) are the streamlines associated to the drift $f$. (c) Exponential trend of convergence towards stationarity, conformably with the results of Theorem~\ref{thm: stat}.}
    \label{fig:globalcontr}
\end{figure}

A first result proved in~\cite{SB-GB-FB-SZ:23k} is that if $\max\{u_{1},u_{2}\}<\upbeta^{-1}$, then the system has a unique globally asymptotically stable equilibrium point - the origin. Choosing $\upbeta=2$ and $u_{1}=0.2$, $u_{2}=0.25$, the Hopfield dynamics become globally contracting in the 2-norm (see~\citep[Ch. 2]{FB:24-CTDS} for an overview on Contraction theory) with contraction rate $c\approx0.5$. Choosing a diffusion matrix $G(x)=0.4\cdot\text{diag}(\sin(x_{1}),\cos(x_{2}))$ that is both Lipschitz with $L_{G}=0.32$ and sublinear with $s_{G}=0.16$, we can observe from Fig.~\ref{fig:globalcontr}(b) that the stationary measure is a gaussian centered at the the origin. Furthermore, the logarithmic scale plot in Fig.~\ref{fig:globalcontr}(b) reveals the exponential convergence towards the stationary measure predicted by Theorem~\ref{thm: stat}.

Conversely, when $u_{1}>\upbeta^{-1}$, then $\upgamma_{1}M^{1}$ and $-\upgamma_{1} M^{1}$ are equilibrium points for the dynamics, with $\upgamma_{1}=u_{1}\tanh(\upbeta\cdot\gamma_{1})$, and $M^{1}$ being the first column of $M$. This condition extends to all input entries and all columns of $M$. The global convergence to either of the multiple equilibrium point is guaranteed by the Energy (potential) function $\En(x)$. We additionally notice that $-J_{x}\Phi(x)^{-1}\nabla E(x)$ is non-integrable, and therefore we cannot express the stationary distribution in closed form. Consequently, we decide to verify whether we can estimate where most of the probability mass concentrates. By choosing $3=u_{2}>u_{1}=1>\upbeta^{-1}$ and constant diffusion term $G(x)\equiv .4\cdot\mathcal{I}_{2}$, we observe from Fig.~\ref{fig:semiglobalcontr}(b) that most of the probability mass concentrates around the stable equilibria $\upgamma_{2}M^{2}$ and $-\upgamma_{2}M^{2}$, which are associated to the largest input $u_{2}$. Since it was proven in~\cite{SB-GB-FB-SZ:23k} that if $u_{2}> u_{1}$ then $\mathrm{E}(\pm\upgamma_{2}M^{2})< \mathrm{E}(\pm\upgamma_{1}M^{1})< 0$, the numerical simulation validates the results of Theorem~\ref{thm: mass}. In particular, from Fig.~\ref{fig:semiglobalcontr}(a) it is clear that we can choose a radius $r>0$ such that the hypothesis (I) and (II) of Theorem~\ref{thm: mass} hold. Instead, hypothesis (III) can only be verified numerically. Finally, Fig.~\ref{fig:semiglobalcontr}(b) validates the results in~\cite{PM:23} on the exponential transient towards the stationary distribution.

\begin{figure}[!tpbh]
    \centering
    \subfloat{\includegraphics[width=.5\linewidth]{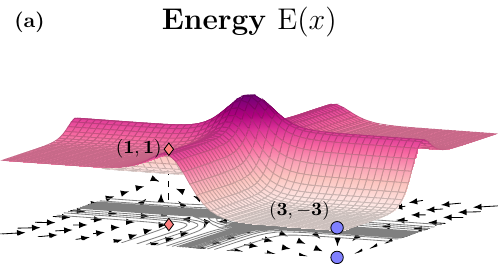}}
    \subfloat{\includegraphics[width=.5\linewidth]{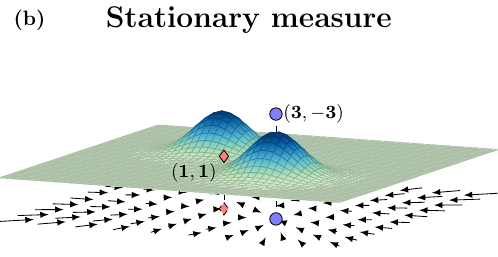}}
    
    \subfloat{\includegraphics[width=.5\linewidth]{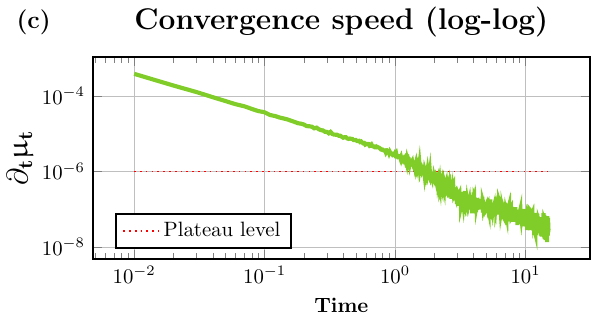}}
    \caption{A stochastic Hopfield model with a $B_{r}$-contracting drift term. (a) Energy associated to the multistable Hopfield model, with antisymmetric minima w.r.t. to the origin. As observable, the minima $(3,-3)$ and $(-3,3)$ associated with the input $u_{2}$ are far deeper than the minima $(1,1)$ and $(-1,-1)$ associated with the input $u_{1}$. (b) The stationary measure concentrates its mass around the stable equilibrium points associated with the deepest Energy $\En(x)$ minima, as predicted by Theorem~\ref{thm: mass}. (c) Exponential trend of convergence towards stationarity, conformably with the results in~\cite{PM:23}.}
    \label{fig:semiglobalcontr}
\end{figure}

Simulations exploit a particle method with $N=2000$ trajectories, Gaussian kernel smoothing (variance $2\cdot \mathcal{I}_2$), and time step $\updelta t=0.01$. The $2$-norm of the difference in the measure over time was used to monitor convergence, with threshold $1\text{e-}6$ indicating stationarity. 

\section{Conclusions}
Despite the intuitive nature of concentration results, asymptotic behavior of measures is a fragile phenomenon that requires careful mathematical manipulation. In this manuscript we have expanded the literature on the convergence of measures associated to globally contracting drift terms and spatially dependent diffusion terms. In particular, we have observed that convergence to a unique stationary measure is guaranteed as long as the contraction rate is twice as big as the Lipschitz constant of the diffusion term. Additionally, we have focused on measures associated to stochastic processes characterized by a $B_{r}$-contracting drift term and constant diffusion term. Theoretical results and numerical simulations reveal how the concentration of mass around the stable equilibria of the drift term depends on the radius of the largest ball centered at the equilibrium and within its basin of attraction as well as the local contraction rate. Moreover, if a non-convex potential $\En(x)$ for the drift term exists, then most of the stationary measure concentrates around its deepest minima. For the sake of completeness, we point out that Proposition~\ref{prop: van} and Theorem~\ref{thm: mass} could be generalized to space of functions with weak derivatives (Sobolev spaces), perhaps at the cost of more burdensome notation. Future works may explore convergence to stationarity of measures associated to $B_{r}$-contracting drift terms and spatially dependent diffusion terms. Finally, numerical investigations may benefit from more extensive experiments using a portfolio of methods for the simulation of partial differential equations as well as dedicated optimal transport  libraries.

\textbf{Acknowledgments:} We would like to thank Prof.\ Katy Craig at UCSB and Lauren E.\ Conger at Caltech for the inspiring discussions held on the topic. The presented work was supported by the grants Next Generation EU C96E22000350007 (S.B.) and AFOSR FA9550-22-1-0059 (F.B.). 

\section*{Appendix}
\begin{lemma}\label{lm: tr}
    Let $\mathcal{M}\subseteq\real^{d}$ be a smooth manifold with boundary, and let $A:\mathcal{M}\to\real^{d\times d}$ and $v:\mathcal{M}\to\real^{d}$ be differentiable. Then
    \begin{equation}
        \medint\int_{\partial\mathcal{M}} (\nabla\cdot A(x),v(x))_{2}\ d_{x}\upsigma = -\medint\int_{\partial\mathcal{M}}\tr(A(x)J_{x}v(x)^{\top})\ d_{x}\upsigma
    \end{equation}
\end{lemma}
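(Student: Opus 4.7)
The plan is to view the identity as an integration-by-parts formula, proving it via the divergence theorem combined with the product rule for matrix-vector divergences. The pointwise starting point is the expansion
\[
\nabla \cdot (A^\top v) = (\nabla \cdot A, v)_2 + \tr(A J_x v^\top),
\]
obtained by direct coordinate computation of $(A^\top v)_k = \sum_j A_{jk} v_j$ and separating the derivatives that fall on $A$ from those that fall on $v$, under the row-wise convention $(\nabla \cdot A)_i = \sum_j \partial_j A_{ij}$ and $(J_x v)_{ij} = \partial_j v_i$ used elsewhere in the paper.

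Given this product rule, I would integrate over the ambient region $\mathcal{M}$ and apply the classical divergence theorem to convert $\int_{\mathcal{M}} \nabla \cdot (A^\top v)\, dx$ into the boundary flux $\int_{\partial \mathcal{M}} (A^\top v, \nu)_2 \, d\sigma$, with $\nu$ the outward unit normal. Rearranging yields the standard volume-plus-flux form
\[
\int_{\mathcal{M}} (\nabla\cdot A, v)_2 \, dx + \int_{\mathcal{M}} \tr(A J_x v^\top) \, dx = \int_{\partial \mathcal{M}} (A^\top v, \nu)_2 \, d\sigma.
\]
To recover the surface-to-surface form stated in the lemma I would then reinterpret both sides as boundary quantities, either by taking $\mathcal{M}$ to be a thin shell about $\partial \mathcal{M}$ and passing to a vanishing-thickness limit, or by viewing $\partial \mathcal{M}$ itself as a closed oriented hypersurface and invoking the intrinsic Stokes theorem, which forces the boundary tangential divergence of $A^\top v$ to integrate to zero, so that after splitting $A^\top v$ into its tangential and normal components the remaining balance reads precisely as the two surface integrals of the claim.

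The main technical obstacle will be the bookkeeping required to pass cleanly from volume to surface integrals while keeping divergence and Jacobian conventions consistent, and ensuring that the residual flux term either vanishes or is absorbed identically on both sides of the stated identity. In the use of the lemma within the proof of Proposition~\ref{prop: van}, the vector field $v = \upxi$ is precisely the outward unit normal to a round ball of radius $r^{\star}$, so $A^\top v$ is radial and the flux and trace terms collapse via the elementary identity $\nabla \cdot \upxi = d/r^{\star}$, which makes the cancellation between the two surface integrals transparent and provides a concrete sanity check on the chosen interpretation.
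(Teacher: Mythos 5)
Your pointwise product rule $\nabla\cdot(A^{\top}v)=(\nabla\cdot A,v)_{2}+\tr(AJ_{x}v^{\top})$ and the resulting volume-plus-flux identity over $\mathcal{M}$ are correct, but the step you defer to the end --- converting that volume identity into the surface-to-surface identity actually claimed --- is the entire content of the lemma, and neither of your two proposed routes can close it. The reason is that the claimed identity is false in the stated generality. Take $\mathcal{M}$ the unit ball in $\real^{d}$, $A(x)=\mathcal{I}_{d}$ and $v(x)=x$. Then $\nabla\cdot A=0$, so the left-hand side vanishes, while $\tr(AJ_{x}v^{\top})=\nabla\cdot v=d$, so the right-hand side equals $-d\,|\partial\mathcal{M}|\neq 0$. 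Consequently the ``residual'' terms you hope will vanish or cancel cannot do so: the thin-shell limit produces normal-derivative and curvature contributions of the same order as the two integrals you want to keep, and the intrinsic Stokes theorem on the closed hypersurface $\partial\mathcal{M}$ only annihilates the \emph{tangential} divergence of the \emph{tangential} part of $A^{\top}v$; the normal component leaves behind a mean-curvature term $\int_{\partial\mathcal{M}}H\,(A^{\top}v,\upxi)_{2}\,d_{x}\upsigma$ (plus normal-derivative terms in the shell picture) that survives in general --- and is exactly what the counterexample detects.

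For comparison, the paper's own proof works directly on the surface: it writes the integrand as $\sum_{i,j}\bigl(\tfrac{d}{dx_{j}}[A_{ij}v_{i}]-A_{ij}\partial_{x_{j}}v_{i}\bigr)$ and discards the first sum on the grounds that $\partial(\partial\mathcal{M})=\emptyset$. That step commits precisely the error you are circling around: $\partial_{x_{j}}$ is an ambient derivative, not a tangential one, so its integral over the closed hypersurface is not governed by Stokes' theorem and does not vanish. In that sense your writeup is more candid about where the difficulty sits, but the difficulty is not bookkeeping --- the identity needs additional hypotheses (e.g.\ restricting to the specific radial structure $v=\upxi$ \emph{and} imposing conditions relating the normal derivative of the relevant quantities to their boundary values) before any version of this argument can go through.
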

\begin{proof}
    The formula for the divergence of a matrix-valued field gives $(\nabla\cdot A(x))_{i}=\sum_{j=1}^{d}\partial_{x_{j}}A_{ij}(x)$ and consequently $(\nabla\cdot A(x),v(x))_{2}=\sum_{i,j=1}^{d}[\partial_{x_{j}}A_{ij}(x)]v_{i}(x)$. Thus, evaluating the integral
    \begin{align}
        &\medint\int_{\partial\mathcal{M}} {(\nabla\cdot A,v)_{2}}_{|x}\ d_{x}\upsigma\ = \medint\int_{\partial\mathcal{M}}\sum_{ij}^{d}[\partial_{x_{j}}A_{ij}]_{|x}{v_{i}}_{|x}\ d_{x}\upsigma\nonumber\\
        =&\sum_{ij}^{d}\medint\int_{\partial\mathcal{M}}\frac{d}{dx_{j}}[A_{ij}v_{i}]_{|x}-[A_{ij}\partial_{x_{j}}v_{i}]_{|x}\ d_{x}\upsigma\nonumber\\ 
        =&\sum_{ij}^{d}\Big{[}\underbrace{\medint\int_{\partial(\partial\mathcal{M})}[A_{ij}v_{i}]_{|x}\ d_{x}\upgamma}_{=0}-\medint\int_{\partial\mathcal{M}}[A_{ij}\partial_{x_{j}}v_{i}]_{|x}\ d_{x}\upsigma\Big{]}\nonumber\\ 
        =&-\medint\int_{\partial\mathcal{M}}\tr(AJ_{x}v^{\top})_{|x}\ d_{x}\upsigma
    \end{align}
    where in the third passage we have used generalized Stokes theorem~\citep{JML:03} on a boundary $\partial\mathcal{M}$ of a smooth manifold $\mathcal{M}$, which has no boundary $\partial(\partial\mathcal{M})$.
\end{proof}

\newpage

\bibliographystyle{unsrtnat}
\bibliography{Main,FB,reference}

\end{document}